\pgfplotsset{compat=1.8}
\let\oldtextit\textit 
\renewcommand\emph[1]{\oldtextit{\color{RoyalBlue}#1}}
\definecolor{RoyalBlue}{cmyk}{1, 0.50, 0, 0}
\def\thanks#1{\protected@xdef\@thanks{\@thanks
        \protect\footnotetext{#1}}}
\theoremstyle{definition}
\newtheorem{theorem}{Theorem}
\newtheorem{lemma}[theorem]{Lemma}
\newtheorem{prop}[theorem]{Proposition}
\title{A priori bounds for certified Krawczyk homotopy tracking}
\author{Kisun Lee\thanks{
\hspace*{-1.8em}School of Mathematical and Statistical Science, Clemson University, 220 Parkway Drive, Clemson, SC 29634 
\\
\texttt{email:}\href{mailto:kisunl@clemson.edu}{kisunl@clemson.edu}, \texttt{ URL:}\href{https://klee669.github.io}{https://klee669.github.io}}}
\date{}
\begin{document}

\maketitle
\begin{abstract}
We establish the first complexity analysis for Krawczyk-based certified homotopy tracking. It consists of explicit a priori stepsize bounds ensuring the success of the Krawczyk test, and an iteration count bound proportional to the weighted length of the solution path. Our a priori bounds reduce the overhead of interval arithmetic, resulting in fewer iterations than previous methods. Experiments using a proof-of-concept implementation validate the results.
\end{abstract}

\section{Introduction}

\emph{Homotopy path tracking} is a method for solving systems of 
nonlinear equations. Given a target system $F:\mathbb{C}^n \to \mathbb{C}^n$ to solve, the method introduces a start system $G:\mathbb{C}^n \to \mathbb{C}^n$ with known solutions and defines a homotopy $F_t: \mathbb{C}^n \times [0,1] \to \mathbb{C}^n$ such that $F_0 = G$ and $F_1 = F$. If both $F=0$ and $G=0$ have finitely many nonsingular solutions, one can track a solution path $x(t)$ from $t=0$ to $t=1$ 
starting at a solution to $G=0$.

Tracking a solution path is carried out using a numerical 
\emph{predictor-corrector method}~\cite[Chapter 2.3]{sommese2005numerical}. Given a time $t_0 \in [0,1]$ and an approximation $x_0$ to a solution to $F_{t_0}=0$, the method computes the next time step $t_1 \in (t_0, 1]$ along with an approximation $x_1$ to a solution of $F_{t_1}=0$. Repeating this process yields a sequence $0 = t_0 < t_1 < \dots < t_k = 1$ 
with corresponding approximations $x_0, x_1, \dots, x_k$.

\emph{Certified homotopy path tracking} algorithms rigorously guarantee correctness by enclosing the path $x(t)$ within a compact region where $F_t = 0$ has a unique solution for each $t \in [0,1]$. This approach has been studied using Smale's alpha theory \cite[Chapter 8]{blum2012complexity} in various formulations \cite{beltran2012certified,beltran2013robust,hauenstein2014posteriori,hauenstein2016certified} and using interval arithmetic 
\cite{kearfott1994interval,van2011reliable,xu2018approach}. Recently, certified tracking using the interval arithmetic has significantly improved \cite{duff2024certified,guillemot2025certified,guillemot2024validated} and has extended to various applications \cite{brazelton2024monodromy,burr2025certified, duff2025numerically,rodriguez2017numerical}. These approaches compute a sequence of discrete time steps $0 = t_0 < t_1 < \cdots < t_k = 1$ 
along with interval boxes $I_i$ that contain the solution path $x(t)$ over each subinterval $[t_{i-1}, t_i]$.

This paper establishes the first complexity analysis for Krawczyk-based certified tracking. We achieve this through two key results. First, we derive explicit \textit{a priori} stepsize bounds that guarantee the success of the Krawczyk test (\Cref{thm:stepsize_bound}). Unlike previous methods that repeatedly evaluate the Krawczyk operator to determine safe stepsizes, our bound depends only on local geometry that can be computed once per step. Second, we provide the total iteration bound of the Krawczyk homotopy in terms of the weighted length of the solution path (\Cref{thm:complexity}).

The significance of this analysis extends to other certified algorithms in algebraic geometry. For instance, certified approximation of algebraic varieties can be achieved either through path tracking-based methods \cite{burr2025certified,martin2013certified} or subdivision-based approaches \cite{burr2020complexity,burr2025certified2,plantinga2007isotopic}. Our complexity analysis enables the analysis of path tracking-based methods, allowing direct comparison with subdivision-based paradigms. More broadly, our approach can be adapted to the complexity analysis of other algorithms employing Krawczyk certification. In addition, on the practical side, our \textit{a priori} stepsize bounds demonstrate a core principle: reducing the overhead of interval arithmetic improves tracking efficiency. Experiments confirm iteration reductions by eliminating repeated Krawczyk operator evaluation (see \Cref{sec:efficiency}).

We begin with introducing preliminaries in \Cref{sec:preliminaries}. In \Cref{sec:a_priori_predictor}, we derive \textit{a priori} stepsize bounds and present a modified tracking algorithm. \Cref{sec:complexity_analysis} establishes iteration count bounds based on these stepsizes. \Cref{sec:experiments} validates our results with experiments.

\section{Preliminaries}\label{sec:preliminaries}
Certified homotopy tracking relies on interval arithmetic to rigorously bound numerical errors, and the Krawczyk operator to certify solution existence and uniqueness. We review these concepts and their integration into the path tracking algorithm.

\subsection{Interval arithmetic}
Interval arithmetic extends standard arithmetic to intervals, yielding conservative enclosures of exact results. For an arithmetic operator $\odot$ and intervals $I, J$, we define 
\[I\odot J:=\{x\odot y\mid x\in I, y\in J\}.\] For instance, $[a,b] + [c,d] = [a+ c,b+ d]$. Explicit formulas for basic operations can be found in \cite{moore2009introduction}.

While intervals are naturally defined over $\mathbb{R}$, we extend interval arithmetic to $\mathbb{C}$ by treating real and imaginary parts independently. A complex interval $I$ takes the form $I = \Re(I) + i\Im(I)$ where $\Re(I), \Im(I) \subset \mathbb{R}$ are real intervals. For two complex intervals $I$ and $J$, we define
\begin{align*}
I+J &= (\Re(I)+\Re(J)) + i(\Im(I)+\Im(J))\\
I-J &= (\Re(I)-\Re(J)) + i(\Im(I)-\Im(J))\\
I\cdot J &= (\Re(I)\cdot\Re(I)-\Im(J)\cdot \Im(J)) + i(\Re(I)\cdot\Im(J)+\Im(I)\cdot\Re(J))\\
I/J &= \frac{\Re(I)\cdot\Re(J)+\Im(I)\cdot\Im(J)}{\Re(I)\cdot \Re(I)+\Im(J)\cdot \Im(J)} + i\frac{\Im(I)\cdot\Re(J)-\Re(I)\cdot\Im(J)}{\Re(I)\cdot \Re(I)+\Im(J)\cdot \Im(J)},\quad 0\not\in J.
\end{align*}
Throughout this paper, we work with complex intervals unless mentioned otherwise.

Let $I=(I_1,\dots, I_n)$ be an $n$-dimensional interval vector in $\mathbb{C}^n$. For a function $f:\mathbb{C}^n\rightarrow \mathbb{C}$, an \emph{interval enclosure} of $f$ over $I$ is an interval $\square f(I) \subset \mathbb{C}$ satisfying
\[\square f(I)\supset \{f(x)\mid x\in I\}.\]
That is, $\square f(I)$ is an interval that encloses the image of $f$ on $I$. Note that $\square f(I)$ is not unique, as it depends on how interval arithmetic is applied to evaluate $f$. 

For an interval vector $I = (I_1,\dots, I_n)$ in $\mathbb{C}^n$, we define
\[\|I\| = \max_{1\leq i\leq n}\max\{|\Re(I_i)|, |\Im(I_i)|\}\]
where $|J| = \max\limits_{a\in J}|a|$ denotes the magnitude of an interval $J$ over $\mathbb{R}$.
This is analogous to the $\infty$-norm when treating $\mathbb{C}^n$ as $\mathbb{R}^{2n}$.

\subsection{Krawczyk method}

The Krawczyk method combines interval arithmetic with Newton's method to certify the existence and uniqueness of solutions to square systems. Although the Krawczyk method is typically presented for real variables, we work in the complex setting as analyzed in \cite{burr2019effective}.

Let $F:\mathbb{C}^n\rightarrow\mathbb{C}^n$ be a square polynomial system and $x\in \mathbb{C}^n$ be a point. For $r > 0$ and an invertible matrix $Y \in \mathbb{C}^{n\times n}$, the \emph{Krawczyk operator} is
\[K(F,x,r,Y) := -YF(x) + (Id - Y\square JF(x+rB))rB\]
where $B = ([-1,1]+i[-1,1])^n$ denotes the unit box in $\mathbb{C}^n$.
We have the following theorem:

\begin{theorem}[see \cite{burr2025certified,guillemot2024validated,krawczyk1969newton}]\label{thm:Krawczyk}
If $K(F,x,r,Y) \subset r\rho B$ for some $r > 0$ and invertible matrix $Y \in \mathbb{C}^{n\times n}$, then there exists a unique solution $x^\star$ to $F=0$ in $x + rB$. Moreover, $\|x-x^\star\|\leq r\rho$ and the quasi-Newton iteration $x-YF(x)$ is $\rho$-Lipschitz continuous.
\end{theorem}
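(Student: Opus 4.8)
The plan is to reduce the claim to a fixed-point argument for the quasi-Newton map and its interval counterpart, exactly in the spirit of the classical Krawczyk theorem but keeping track of the contraction factor $\rho$. First I would introduce the map $g(y) := y - YF(y)$ on the box $x + rB$, and observe that for any $y \in x + rB$ the mean value / interval-extension property gives
\begin{equation*}
g(y) - g(x) \in (Id - Y\,\square JF(x + rB))\,(y - x) \subset (Id - Y\,\square JF(x + rB))\,rB,
\end{equation*}
since $y - x \in rB$ and the interval matrix $(Id - Y\,\square JF(x+rB))$ contains every actual value of $Id - Y JF(\xi)$ for $\xi \in x+rB$. Consequently $g(y) - g(x) - YF(x) \in K(F,x,r,Y) \subset r\rho B$, and adding the known point $x = g(x) + YF(x)$ we get $g(y) \in x + K(F,x,r,Y) \subset x + r\rho B \subset x + rB$ (using $\rho \le 1$, which is forced since $K \subset r\rho B$ must be nonempty and one checks $\rho<1$ or at worst $\rho\le 1$ from the hypothesis; I'd state the hypothesis as $\rho<1$ if the paper's convention requires strict contraction). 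Hence $g$ maps the complete metric space $x + rB$ (with the $\|\cdot\|$ metric) into itself.

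Next I would establish that $g$ is a contraction, or more precisely the $\rho$-Lipschitz bound. For $y, z \in x + rB$, the same interval-enclosure reasoning gives
\begin{equation*}
g(y) - g(z) \in (Id - Y\,\square JF(x+rB))\,(y - z),
\end{equation*}
and since $y - z$ lies in a box of $\|\cdot\|$-radius at most $2r$ (or, scaling, $\|y-z\|$), the definition of the Krawczyk operator's "variable part" yields $\|g(y) - g(z)\| \le \rho \|y - z\|$: the point is that the off-center term $(Id - Y\,\square JF(x+rB))rB$ being contained in $r\rho B$ means the interval matrix $Id - Y\,\square JF(x+rB)$ has $\|\cdot\|$-operator-norm at most $\rho$ in the relevant sense, and this bound transfers to arbitrary displacements, not just displacements filling the whole box. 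This is the step I expect to be the most delicate, because one must be careful that "$K \subset r\rho B$" controls the matrix's action on every sub-vector of $rB$ and not merely on the full box $rB$; the cleanest route is to note that for any $v \in \mathbb{C}^n$ with $\|v\| \le r$, we have $v \in rB$, so $(Id - Y\square JF(x+rB))v \subset r\rho B$, giving $\|(Id - Y\square JF(x+rB))v\| \le r\rho$, and then homogeneity extends this to all $v$. Combined with the interval mean value inclusion, this gives both the self-map property and the Lipschitz constant $\rho$.

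Then I would invoke the Banach fixed point theorem: $g$ has a unique fixed point $x^\star \in x + rB$, and a fixed point of $g = Id - YF$ satisfies $YF(x^\star) = 0$, hence $F(x^\star) = 0$ since $Y$ is invertible; conversely any zero of $F$ in $x+rB$ is a fixed point, so the zero is unique in $x+rB$. For the error bound, apply the self-map estimate with $y = x$: $x^\star = g(x^\star)$ and $g(x) - x = -YF(x)$, but more directly $x^\star - x = g(x^\star) - x \in K(F,x,r,Y) \subset r\rho B$ after the shift argument above, giving $\|x^\star - x\| \le r\rho$. Finally, the $\rho$-Lipschitz continuity of $x \mapsto x - YF(x)$ on the box is exactly the contraction estimate already proved. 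I would close by remarking that strictness of the inequality $\rho < 1$ (if assumed) is what makes Banach applicable, and otherwise a boundary case needs the explicit statement $K \subsetneq rB$; I'd align the write-up with whichever convention \Cref{thm:Krawczyk} intends.
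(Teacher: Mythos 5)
The paper does not prove \Cref{thm:Krawczyk}; it cites it from the literature, so I'm evaluating your argument on its own terms. Your overall strategy — treat $g(y)=y-YF(y)$ as a self-map of $x+rB$ via the interval mean-value inclusion, then invoke Banach — is the standard route and is correct in outline, including the error-bound step $x^\star-x=g(x^\star)-x\in K\subset r\rho B$ and the identification of fixed points of $g$ with zeros of $F$ (using invertibility of $Y$).

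There is, however, a genuine gap at precisely the place you flag as delicate. For the Lipschitz claim you need every matrix $M_0$ in the interval matrix $Id-Y\,\square JF(x+rB)$ to have operator norm at most $\rho$, which you reduce to the assertion $(Id - Y\,\square JF(x+rB))\,rB \subset r\rho B$. But the hypothesis is the \emph{shifted} inclusion
\[
-YF(x) + (Id - Y\,\square JF(x+rB))\,rB \subset r\rho B,
\]
and dropping the shift $-YF(x)$ is not free. You present it as if it were immediate ("the off-center term \dots being contained in $r\rho B$ means \dots", "so $(Id - Y\,\square JF(x+rB))v \subset r\rho B$"), but this is exactly what must be proved. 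The missing lemma is a centering argument: the set $S := (Id - Y\,\square JF(x+rB))\,rB$ is symmetric about the origin (because $rB$ is centrally symmetric and, in the paper's componentwise complex interval arithmetic, multiplying an interval by a symmetric interval yields a symmetric interval), and if a symmetric set $S$ satisfies $c+S\subset r\rho B$ for some point $c$, then for each $s\in S$ both $c+s$ and $c-s$ lie in $r\rho B$, so $\|s\|=\tfrac12\|(c+s)-(c-s)\|\le r\rho$, giving $S\subset r\rho B$. With that lemma inserted, your homogeneity step (pass from $v\in rB$ to arbitrary $v$ by scaling, valid since $\|\cdot\|$ is a norm for which $\|v\|\le r\iff v\in rB$) then correctly yields the operator-norm bound $\rho$, and the rest of the Banach argument goes through. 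I'd also make explicit, rather than parenthetical, that the self-map and contraction steps require the strict hypothesis $\rho<1$, which the paper enforces via $0<\rho<\tau<1$.
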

When this holds, we say that $x$ is a \emph{$\rho$-approximate solution} with \emph{certification radius} $r$. 

In practice, the matrix $Y$ is chosen to minimize $\|Id - Y\square JF(x+rB)\|$. The standard choice is $Y = JF(x)^{-1}$, which we adopt for our \textit{a priori} stepsize and complexity analysis in \Cref{sec:a_priori_predictor,sec:complexity_analysis}. Additionally, since exact evaluation of $F(x)$ is often infeasible, one typically replaces $F(x)$ with its interval enclosure $\square F(x)$.

To verify the containment $K(F,x,r,Y)\subset r\rho B$, one checks the inequality
\begin{equation}\label{eq:krawczyk_condition}
    \|K(F,x,r,Y)\| < r\rho.
\end{equation}
Since $\|K(F,x,r,Y)\|$ depends on how interval arithmetic is applied when evaluating $F$ and $JF$, different evaluation strategies may yield different results. Consequently, overly conservative enclosures may cause certification to fail even when $x$ is close to an actual solution.

\subsection{Krawczyk homotopy}

The Krawczyk method extends to certified homotopy tracking. The key idea is to apply the Krawczyk operator to a homotopy $F_t=0$ over an interval $T \subset [0,1]$, certifying the existence and uniqueness of a solution $x(t)$ for each $t \in T$. We now describe the algorithm based on \cite{duff2024certified,guillemot2024validated}.

Given a polynomial system $F$ and a $\tau$-approximate solution $x$ with $0<\tau<1$, the core task in the Krawczyk homotopy is to refine $x$ into a $\rho$-approximate solution for a desired threshold $0<\rho<\tau$. The refinement strategy repeats quasi-Newton steps $x \mapsto x - YF(x)$ and radius adjustments until the Krawczyk test $\|K(F,x,r,Y)\|<r\rho$ is satisfied. We present \Cref{algo:meta_refine} for this task.

\begin{algorithm}[ht]
	\caption{RefineSolution~\cite[Algorithm 2]{guillemot2024validated}}
 \label{algo:meta_refine}
\begin{algorithmic}[1]
\Require  
a polynomial system $F:\mathbb{C}^n\rightarrow \mathbb{C}^n$, a $\tau$-approximate solution $x$ with certification radius $r$ and matrix $Y \in \mathbb{C}^{n\times n}$, and target threshold $\rho\in (0,\tau)$.
\Ensure a $\rho$-approximate solution $\tilde{x}$ with certification radius $\tilde{r}$ and matrix $\tilde{Y}$.
\State{Initialize $\tilde{x} = x$, $\tilde{r} = r$, $\tilde{Y} = Y$}
\While{$\|K(F,\tilde{x},\tilde{r},\tilde{Y})\| \geq \tilde{r}\rho$} \Comment{Refinement loop}
\If{$\|Y F(\tilde{x})\| \leq \frac{1}{8}(1-\rho)\tau \tilde{r}$} 
\State{$\tilde{r} = \frac{1}{2}\tilde{r}$} \Comment{Shrink radius if the solution is accurate enough}
\Else
\State{$\tilde{x} = \tilde{x} - YF(\tilde{x})$} \Comment{Newton step}
\EndIf
\State{$\tilde{Y} = JF(\tilde{x})^{-1}$} \Comment{Update Jacobian}
\EndWhile
\While{$2\tilde{r} \leq 1$ and $\|K(F,\tilde{x},2\tilde{r},\tilde{Y})\| < 2\tilde{r}\rho$}\label{line:while_loop_in_Refine} \Comment{Expand radius}
\State{$\tilde{r} = 2\tilde{r}$}
\EndWhile
\State{\Return $\tilde{x}, \tilde{r}, \tilde{Y}$}
 \end{algorithmic}
 \end{algorithm}
The second loop maximizes the certification radius while preserving the Krawczyk condition. A larger radius reduces the number of tracking steps needed, as a small radius increases the risk of the solution path leaving the interval box. The condition $2\tilde{r} \leq 1$ in this loop
is not necessary for the algorithm's correctness, but ensures $r \leq 1$
for the complexity analysis in \Cref{sec:complexity_analysis}. 
In practice, the bound $r\leq 1$ is naturally satisfied in most tracking scenarios.

The Krawczyk homotopy operates in two phases: \textbf{refine tight with $\rho$, proceed with $\tau$}. At time $t$, we refine the solution to satisfy $\|K(F_t,x,r,Y)\| \leq r\rho$. We then proceed over the interval $T=[t,t+h]$ with $\|K(F_T,x,r,Y)\| \leq r\tau$ where $\tau>\rho$.
\Cref{algo:track} provides the algorithm for the Krawczyk homotopy.

\begin{algorithm}[ht]
	\caption{Track~\cite[Algorithm 3]{guillemot2024validated}}
 \label{algo:track}
\begin{algorithmic}[1]
\Require  
a homotopy $F_t:\mathbb{C}^n\times [0,1]\rightarrow \mathbb{C}^n$ with solution path $x(t)$ satisfying $F_t(x(t))=0$, 
a point $x$ approximating $x(0)$ with radius $r$ and matrix $Y$, 
and thresholds $0<\rho<\tau<1$.
\Ensure a $\rho$-approximate solution to $F_1=0$.
\State{Initialize $t=0, dt=1$.}
\While{$t<1$} 
\State{$x,r,Y=\text{RefineSolution}(F_t,x,r,Y,\rho)$} \label{line:refine}\Comment{Refine $x$ with $\rho$}
\State{$dt=2dt$}
\State{$T=[t,t+dt]$}
\While{$\|K(F_T,x,r,Y)\|\geq r\tau$}
\Comment{Proceed with $\tau$}
\State{$dt=\frac{1}{2}dt$} \State{$T=[t,t+dt]$} 
\EndWhile
\State{$t=t+dt$}
\EndWhile
\State{$x,r,Y=\text{RefineSolution}(F_1,x,r,Y,\rho)$} 
\State{\Return $x$}
 \end{algorithmic}
 \end{algorithm}

Note that \Cref{algo:track} employs an adaptive strategy; the Krawczyk operator is repeatedly computed to determine a stepsize $dt$ for the success of the Krawczyk method. Also, it uses a constant predictor, meaning that the approximate solution $x$ at time $t$ is used directly as the initial guess at $t+dt$. Although higher-order predictors can improve practical performance \cite{duff2024certified,guillemot2024validated}, we focus on the constant predictor case because it provides the essential complexity bounds and serves as a theoretical baseline for more sophisticated methods.

\section{\textit{A priori} stepsize bounds}\label{sec:a_priori_predictor}

In this section, we derive an \textit{a priori} bound on the stepsize for the Krawczyk homotopy. This bound determines how far we can proceed while guaranteeing the Krawczyk method succeeds. Our result is the Krawczyk homotopy analogue of the stepsize bounds for alpha theoretic certified tracking studied in \cite{beltran2013robust,hauenstein2016certified,shub2009complexity,shub1993complexity}.

\subsection{Affine linear parameter homotopies}
Throughout this paper, we work with affine linear parameter homotopies. It is a class of homotopies that arises naturally in various applications and whose explicit structure makes them amenable to complexity analysis.

Consider a homotopy $F_t(x) = F(x; p(t))$ where the parameter varies affinely $p(t) = tp_0 + (1-t)p_1$. We call such $F_t$ an \emph{affine linear} parameter homotopy. We can decompose
\[F_t(x) = F^{(1)}(x; p(t)) + F^{(2)}(x)\]
where $F^{(1)}(x; p(t))$ contains all monomials involving parameters and $F^{(2)}(x)$ collects parameter-free monomials. The affine structure gives us explicit control over how $F_t$ varies with $t$. Specifically, for any stepsize $dt$:
\begin{align*}
    F_{t+dt}(x) &= F^{(1)}(x; (p_1-p_0)(t+dt) + p_0) + F^{(2)}(x)\\
    &= F^{(1)}(x; (p_1-p_0)t + p_0) + F^{(1)}(x; (p_1-p_0)dt) + F^{(2)}(x)\\
    &= F_t(x) + F^{(1)}(x; (p_1-p_0)dt)\\
    &= F_t(x) + dt  F^{(1)}(x; p_1-p_0).
\end{align*}
This linearity extends to all derivatives, that is, $J^k F_{t+dt} = J^k F_t + dt J^k F^{(1)}(x; p_1-p_0)$ for all $k$. 
Since the parameter $p_1 - p_0$ is fixed throughout, we suppress it from the notation and simply write $F^{(1)}(x)$ for $F^{(1)}(x; p_1-p_0)$.

\subsection{Derivation}

We now derive explicit stepsize bounds for the Krawczyk homotopy. Consider an affine linear parameter homotopy $F_t=0$ and fix $t_0 \in [0,1]$. Let $x$ be a $\rho$-approximate solution to $F_{t_0}=0$ with certification radius $r$ obtained from \Cref{algo:meta_refine}. We aim to find the maximum stepsize $dt$ such that $x$ remains a $\tau$-approximate solution to $F_{[t_0,t_0+dt]}=0$, allowing the looser threshold $\tau > \rho$.

Our guiding principle is \textbf{refine tight with $\rho$, proceed with $\tau$}. To make this precise, we translate it into two Krawczyk conditions: $\|K(F_{t_0},x,r,Y)\| \leq r\rho$ for tight certification at time $t$, and $\|K(F_{[t_0,t_0+dt]},x,r,Y)\| \leq r\tau$ for tracking over the interval. The gap between $\tau$ and $\rho$ determines the achievable stepsize.

We take $Y = JF_{t_0}(x)^{-1}$ for the remainder of this section. This simplifies both the analysis and notation, and is the standard choice in practice. With this convention, we write $K(F_{t_0},x,r)$ instead of $K(F_{t_0},x,r,Y)$. We have the first main result as below:

\begin{theorem}\label{thm:stepsize_bound}
Let $F_t:\mathbb{C}^n\times [0,1]\rightarrow\mathbb{C}^n$ be an affine linear parameter homotopy and fix $t_0 \in [0,1]$. Suppose $x$ is a $\rho$-approximate solution to $F_{t_0}=0$ with certification radius $r$ and $Y = JF_{t_0}(x)^{-1}$. Then $x$ is a $\tau$-approximate solution to $F_t=0$ for all $t \in [t_0, t_0+dt]$ whenever
\begin{equation}\label{eq:stepsize_bound}
dt \leq \frac{(\tau-\rho)r}{\|Y F^{(1)}(x)\| + r\sup\limits_{z\in x+rB}\|Y JF^{(1)}(z)\|}.
\end{equation}
\end{theorem}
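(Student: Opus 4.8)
The plan is to reduce the claim to a single Krawczyk containment over the whole interval $T = [t_0, t_0+dt]$, and then use the affine linear structure to peel that containment into the condition already known to hold at $t_0$ plus an error term of size $O(dt)$. First I would note that it suffices to prove $\|K(F_T, x, r, Y)\| \le r\tau$, where $F_T$ denotes the homotopy with $t$ replaced by the interval $T$: for any fixed $t \in T$ one has $F_t(x) \in F_T(x)$ and $\square JF_t(x+rB) \subseteq \square JF_T(x+rB)$, hence $K(F_t, x, r, Y) \subseteq K(F_T, x, r, Y) \subseteq r\tau B$, and \Cref{thm:Krawczyk} (which permits an arbitrary invertible $Y$, so retaining $Y = JF_{t_0}(x)^{-1}$ for every $t$ is legitimate) certifies $x$ as a $\tau$-approximate solution to $F_t = 0$.

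Next I would invoke the affine linear decomposition. Over the interval $T$, the $t$-dependence is carried entirely by a single linear term, so, organizing the interval evaluation accordingly, we have $F_T(x) \subseteq F_{t_0}(x) + [0,dt]\,F^{(1)}(x)$ and $\square JF_T(x+rB) \subseteq \square JF_{t_0}(x+rB) + [0,dt]\,\square JF^{(1)}(x+rB)$. Substituting these into the definition of the Krawczyk operator and distributing (by subdistributivity of interval arithmetic), $K(F_T,x,r,Y)$ is contained in
$$\bigl(-Y F_{t_0}(x) + (Id - Y\square JF_{t_0}(x+rB))rB\bigr) - [0,dt]\,Y F^{(1)}(x) - [0,dt]\,Y\,\square JF^{(1)}(x+rB)\,rB,$$
and the first term is precisely $K(F_{t_0},x,r)$.

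Taking norms now finishes the computation: subadditivity of $\|\cdot\|$ on interval sums, the identity $\|[0,dt]\cdot I\| = dt\,\|I\|$ for $dt \ge 0$, and the standard estimate $\|M\cdot rB\| \le r\,\|M\|$ for an interval matrix $M$ give
$$\|K(F_T,x,r,Y)\| \le \|K(F_{t_0},x,r)\| + dt\,\|Y F^{(1)}(x)\| + dt\, r\,\sup_{z\in x+rB}\|Y JF^{(1)}(z)\|.$$
Since $x$ is a $\rho$-approximate solution to $F_{t_0}=0$ with radius $r$, the first summand is at most $r\rho$; requiring the whole right-hand side to be at most $r\tau$ and solving the resulting linear inequality for $dt$ yields exactly \eqref{eq:stepsize_bound}, and the reduction in the first paragraph closes the proof.

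The delicate part is the interval-arithmetic bookkeeping in the middle step: one must verify that the interval evaluation of $JF_T$ can be arranged so that it honestly splits as $\square JF_{t_0}(x+rB) + [0,dt]\,\square JF^{(1)}(x+rB)$ (the affine structure makes this natural, as $t$ enters only through the scalar factor in front of $F^{(1)}$), and that each distributivity and triangle-inequality step respects the direction of interval inclusion. Related to this is the passage from $\|Y\,\square JF^{(1)}(x+rB)\|$ to $\sup_{z\in x+rB}\|Y JF^{(1)}(z)\|$, which is immediate if the enclosure is taken tight but otherwise needs a centered-form or Taylor-type control of the enclosure overestimation; this is where I expect the argument to require the most care.
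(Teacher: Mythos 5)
Your proposal is correct and uses the same key idea as the paper's proof: exploit the affine decomposition $F_{t_0+dt} = F_{t_0} + dt\,F^{(1)}$ (and the analogous identity for $JF$), split the Krawczyk operator by the triangle inequality, absorb the $t_0$-part using the $\rho$-approximation hypothesis, and solve the resulting linear inequality for $dt$. The only organizational difference is that you carry out the estimate at the interval level $T=[t_0,t_0+dt]$ with explicit inclusion-monotonicity and subdistributivity, whereas the paper fixes $t=t_0+dt$ and a pointwise $z\in x+rB$ and then takes suprema; your flagged concern about passing from $\|Y\,\square JF^{(1)}(x+rB)\|$ to $\sup_{z}\|YJF^{(1)}(z)\|$ is a legitimate one that the paper's pointwise formulation simply sidesteps by writing the bound directly as a $\sup$ over $z$.
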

\begin{proof}
    Since $x$ is a $\rho$-approximate solution to $F_{t_0}=0$, we have
    \begin{equation}\label{eq:rho-approx_condition}
        \|K(F_{t_0},x,r)\|=\|-YF_{t_0}(x)+(Id-Y\square JF_{t_0}(x+rB))rB\|\leq r\rho.
    \end{equation}

    At $t=t_0+dt$, we expand
    \begin{equation*}
        YF_{t_0+dt}(x)=Y(F_{t_0}(x)+dtF^{(1)}(x))
    \end{equation*}
    and for any $z\in x+rB$,
    \begin{align*}
        Id-YJF_{t_0+dt}(z)&=Id-Y(JF_{t_0}(z)+dtJF^{(1)}(z))\\
        &=(Id-YJF_{t_0}(z))-dtYJF^{(1)}(z).
    \end{align*}
    Hence,
    \begin{multline*}
        -YF_{t_0+dt}(x)+(Id-YJF_{t_0+dt}(z))rB \\
        = -YF_{t_0}(x)+(Id-YJF_{t_0}(z))rB 
        - dtY(F^{(1)}(x)+JF^{(1)}(z)rB).
    \end{multline*}
Taking norms and using the inequality \eqref{eq:rho-approx_condition} yields
\begin{multline*}
\|K(F_{t_0+dt},x,r)\|=\|-YF_{t_0+dt}(x)+(Id-YJF_{t_0+dt}(z))rB\|
\\\leq r\rho+dt(\|YF^{(1)}(x)\|+r\|YJF^{(1)}(z)\|).   \end{multline*}

Thus, $x$ is a $\tau$-approximate solution to $F_{t_0+dt}=0$ provided
\[
r\rho+dt\bigl(\|YF^{(1)}(x)\|+r\|YJF^{(1)}(z)\|\bigr)\leq r\tau.
\]
Solving for $dt$ and taking the supremum over $z\in x+rB$ gives the claimed bound.
\end{proof}

The stepsize bound has a natural interpretation. The numerator $(\tau-\rho)r$ measures the slack between certification thresholds, while the denominator captures the rate at which the homotopy varies. Faster variation requires smaller steps. 

Computing the bound \eqref{eq:stepsize_bound} requires interval arithmetic only for $\sup\limits_{z\in x+rB}\|YJF^{(1)}(z)\|$. In contrast, evaluating the entire Krawczyk operator $K(F_t,x,r)$ with interval arithmetic may cause overestimation. Using a \textit{priori} bound avoids this, allows a larger stepsize, and so improves efficiency. Experimental comparisons are provided in \Cref{sec:efficiency}.

We present \Cref{algo:a-priori_track}, which uses the \textit{a priori} stepsize bound from \Cref{thm:stepsize_bound} to eliminate the adaptive tracking of \Cref{algo:track}.

\begin{algorithm}[ht]
	\caption{AprioriTrack}
 \label{algo:a-priori_track}
\begin{algorithmic}[1]
\Require  
a homotopy $F_t:\mathbb{C}^n\times [0,1]\rightarrow \mathbb{C}^n$ with solution path $x(t)$ satisfying $F_t(x(t))=0$, 
a point $x$ approximating $x(0)$ with radius $r$ 
and thresholds $0<\rho<\tau<1$.
\Ensure a $\rho$-approximate solution to $F_1=0$.
\State{Initialize $t=0$.}
\While{$t<1$} 
\State{$x,r,Y=\text{RefineSolution}(F_t,x,r,Y,\rho)$} \Comment{Refine $x$ with $\rho$}
\State{$dt=\frac{(\tau-\rho)r}{\|YF^{(1)}(x)\|+r\sup\limits_{z\in x+rB}\|YJF^{(1)}(z)\|}$}
\State{$t=t+dt$}
\Comment{Proceed with \textit{a priori} stepsize}
\EndWhile
\State{$x,r,Y=\text{RefineSolution}(F_1,x,r,Y,\rho)$} 
\State{\Return $x$}
 \end{algorithmic}
 \end{algorithm}

\section{Complexity analysis}\label{sec:complexity_analysis}

One observes that the stepsize bound \eqref{eq:stepsize_bound} depends on local quantities at each point along the path. By integrating these constraints over the entire solution path $x(t)$ for $t \in [0,1]$, we obtain an upper bound on the total number of tracking steps required.

Define the \emph{weighted path length} as
\[
L := \int_0^1 \frac{1}{r(t)} \left( \|JF_t(x(t))^{-1}F^{(1)}(x(t))\| + r(t) \sup_{z \in x(t)+r(t)B} \|JF_t(x(t))^{-1}JF^{(1)}(z)\| \right) dt
\]
where $r(t)$ denotes the certification radius at time $t$. This quantity captures how the homotopy varies relative to the local certification scale, analogous to the condition-based path length in alpha theory \cite{beltran2013robust,hauenstein2016certified,shub2009complexity}.

The reciprocal of our stepsize bound appears as the integrand, suggesting that the number of steps should be proportional to $L$. We make this precise in this section.

\subsection{Iteration count bound}

We first state an assumption required for the main analysis.
For an affine linear homotopy $F_t=0$ with a nonsingular solution path $x(t)$, differentiating $F_t(x(t))=0$ with respect to $t$ yields
\begin{equation*}
    \frac{dF_t}{dt}+JF_t(x(t))\dot{x}(t)=0.
\end{equation*}
Solving for $\dot{x}(t)$, we have
\begin{equation}\label{eq:velocity}
    \dot{x}(t)=-JF_t(x(t))^{-1}\frac{dF_t}{dt}=-JF_t(x(t))^{-1}F^{(1)}(x(t)).
\end{equation}
Thus $\|JF_t(x(t))^{-1}F^{(1)}(x(t))\|$ represents the speed of the solution path. If this vanishes at some $t$, the path becomes stationary and the complexity analysis breaks down. To exclude this degeneracy, we assume there exists $\eta>0$ such that 
\begin{equation*}
    r(t)\sup\limits_{z\in x(t)+r(t)B}\|JF_t(x(t))^{-1}JF_t(z)\|\leq \eta \|JF_t(x(t))^{-1}F^{(1)}(x(t))\|.
\end{equation*}
This condition ensures the path maintains nonzero velocity throughout $[0,1]$. Also, it is natural to assume it for generic homotopies since $x(t)$ is a nonsingular solution, and so the left-hand side is bounded below by $r(t)$.

We present a useful technical lemma: 
\begin{lemma}
    Let $F_t:\mathbb{C}^n\times [0,1]\rightarrow \mathbb{C}^n$ be a homotopy. For a fixed $t_0\in [0,1]$, let $y$ be a $\rho$-approximate solution to $F_{t_0}=0$ which is also a $\tau$-approximate solution to $F_t=0$ for any $t\in [t_0,t_0+dt]$ for some $dt>0$. Furthermore, let $x(t)$ denote the solution path satisfying $F_t(x(t))=0$ for each $t$. Then, 
    \begin{equation}\label{eq:E-bound}
        \|JF_t(x(t))^{-1}JF_{t_0}(y)\|\geq\frac{1}{1+\tau}. 
    \end{equation}
\end{lemma}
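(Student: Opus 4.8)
The plan is to reduce the claimed inequality to a one-line Neumann-type estimate on the matrix $YJF_t(x(t))$, where $Y=JF_{t_0}(y)^{-1}$ is the fixed Jacobian inverse used in all the Krawczyk tests of the hypothesis (the $\rho$-approximate hypothesis at $t_0$ only serves to make this $Y$ well defined). First I would fix $t\in[t_0,t_0+dt]$. Since $y$ is a $\tau$-approximate solution to $F_t=0$ with certification radius $r$, \Cref{thm:Krawczyk} gives a unique zero of $F_t$ in $y+rB$; as $x(t)$ is the tracked solution path with $F_t(x(t))=0$, this forces $x(t)\in y+rB$. Rewriting the target, $JF_t(x(t))^{-1}JF_{t_0}(y)=JF_t(x(t))^{-1}Y^{-1}=\bigl(YJF_t(x(t))\bigr)^{-1}$, so it suffices to bound $\|YJF_t(x(t))\|$ from above by $1+\tau$.

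The key step is to extract from the Krawczyk condition the estimate
\[\|Id-YJF_t(x(t))\|\le\tau.\]
I see two routes. The direct one uses the structure $K(F_t,y,r,Y)=-YF_t(y)+\bigl(Id-Y\square JF_t(y+rB)\bigr)rB$: the second summand is symmetric about the origin because $B=-B$, and $r\tau B$ is convex and symmetric, so the containment $K(F_t,y,r,Y)\subset r\tau B$ forces $\bigl(Id-Y\square JF_t(y+rB)\bigr)rB\subset r\tau B$, hence $\|Id-Y\square JF_t(y+rB)\|\le\tau$; since $x(t)\in y+rB$, the point matrix $JF_t(x(t))$ lies inside the interval enclosure $\square JF_t(y+rB)$ and the bound passes to it in the $\|\cdot\|$-norm. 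Alternatively, invoke the last clause of \Cref{thm:Krawczyk}: the holomorphic quasi-Newton map $z\mapsto z-YF_t(z)$ is $\tau$-Lipschitz on the certified box $y+rB$, so its Lipschitz constant $\sup_{z\in y+rB}\|Id-YJF_t(z)\|$ is at most $\tau$, and in particular the inequality holds at $z=x(t)$.

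Granting this, I would set $M:=Id-YJF_t(x(t))$, so that $\|M\|\le\tau<1$ and $YJF_t(x(t))=Id-M$. Since $x(t)$ is a nonsingular point of the path, $YJF_t(x(t))$ is invertible, and using submultiplicativity of the operator norm together with $\|Id\|=1$,
\[1=\|(Id-M)^{-1}(Id-M)\|\le\|(Id-M)^{-1}\|\,(1+\|M\|)\le(1+\tau)\,\|(Id-M)^{-1}\|,\]
whence $\|JF_t(x(t))^{-1}JF_{t_0}(y)\|=\|(Id-M)^{-1}\|\ge\tfrac{1}{1+\tau}$, which is \eqref{eq:E-bound}.

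\textbf{Main obstacle.} The only nontrivial point is the displayed bound $\|Id-YJF_t(x(t))\|\le\tau$: in the direct route, the symmetry/Minkowski-sum observation must be combined with the interval-arithmetic fact that a point matrix evaluated inside the box is dominated in the $\|\cdot\|$-norm by its interval enclosure, and this is where all the bookkeeping sits; the Lipschitz route sidesteps this by citing \Cref{thm:Krawczyk} directly. Everything after that bound is a routine Neumann/submultiplicativity estimate.
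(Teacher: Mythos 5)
Your proposal is correct and follows essentially the same route as the paper: both hinge on the bound $\|Id - JF_{t_0}(y)^{-1}JF_t(x(t))\| \le \tau$ extracted from the $\tau$-approximate hypothesis, followed by a one-line Neumann estimate showing $\|(Id-E)^{-1}\| \ge \tfrac{1}{1+\tau}$. The paper simply asserts $\|E\| < \tau$ as an immediate consequence of the $\tau$-approximate condition and then argues via the vector-level inequality $\|w\| \le (1+\tau)\|(Id-E)^{-1}w\|$, whereas you expand the justification of $\|E\| \le \tau$ (via symmetry of the Krawczyk summand or the Lipschitz clause of \Cref{thm:Krawczyk}) and finish with submultiplicativity of the operator norm; these are cosmetic variations of the same argument.
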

\begin{proof}
    Define $E=Id-JF_{t_0}(y)^{-1}JF_t(x(t))$. As $y$ is a $\tau$-approximate solution to $F_t=0$ for all $t\in [t_0,t_0+dt]$, we have $\|E\|< \tau$. Also, we have $Id-E=JF_{t_0}(y)^{-1}JF_t(x(t))$, and so $(Id-E)^{-1}JF_{t_0}(y)^{-1}=JF_t(x(t))^{-1}$. For a vector $w\in \mathbb{C}^n$, we define $v=(Id-E)^{-1}w$. In this case, 
\begin{equation*}
\|w\|=\|(Id-E)v\|\leq \|v\|+\|Ev\|\leq (1+\|E\|)\|v\|\leq (1+\tau)\|v\|=(1+\tau)\|(Id-E)^{-1}w\|.    
\end{equation*}
It implies that 
\begin{equation*}
    \|(Id-E)^{-1}w\|\geq \frac{1}{1+\tau}\|w\|.
\end{equation*}
The claim follows as $w$ is arbitrary.
\end{proof}

We present the main result of this section.
\begin{theorem}\label{thm:complexity}
    Let $L$ be the weighted path length for a nonsingular solution path $x(t)$ to an affine linear homotopy $F_t=0$. Assume there exists $\eta>0$ such that 
    \begin{equation}\label{eq:eta}
        r(t)\sup\limits_{z\in x(t)+r(t)B}\|JF_t(x(t))^{-1}JF_t(z)\|
        \leq \eta \|JF_t(x(t))^{-1}F^{(1)}(x(t))\|
    \end{equation}
    for all $t\in[0,1]$.
    
    Suppose \Cref{algo:a-priori_track} terminates in $P$ steps with 
    certification radii $r_0, r_1, \ldots, r_{P-1}$ at each refinement. 
    Then,
    \begin{equation}\label{eq:complextiy_bound}
    P \leq \frac{(1+\tau)(1+\eta)}{(\tau-\rho)r_{\min}} L        
    \end{equation}
    where $r_{\min} := \min\limits_{i=0,\dots, P-1}r_i$.
\end{theorem}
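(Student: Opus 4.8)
The plan is to bound the time advanced in a single step from below, then sum over the $P$ steps and compare with the integral defining $L$. Fix step $i$, starting at time $t_i$ with $\rho$-approximate solution $x_i$ of certification radius $r_i$ and $Y_i = JF_{t_i}(x_i)^{-1}$. By \Cref{thm:stepsize_bound}, the step advances by
\[
dt_i = \frac{(\tau-\rho)r_i}{\|Y_i F^{(1)}(x_i)\| + r_i\sup\limits_{z\in x_i+r_iB}\|Y_i JF^{(1)}(z)\|}.
\]
I want to show $dt_i \geq \frac{(\tau-\rho)r_{\min}}{(1+\tau)(1+\eta)}\cdot\frac{1}{M(t_i)}$ (up to the right constant), where $M(t)$ is the integrand of $L$ evaluated along the true path, i.e.
\[
M(t) = \frac{1}{r(t)}\Bigl(\|JF_t(x(t))^{-1}F^{(1)}(x(t))\| + r(t)\sup_{z\in x(t)+r(t)B}\|JF_t(x(t))^{-1}JF^{(1)}(z)\|\Bigr).
\]
The key point is that the denominator in $dt_i$ is written with $Y_i = JF_{t_i}(x_i)^{-1}$ (the Jacobian at the \emph{approximate} solution), whereas $L$ uses $JF_t(x(t))^{-1}$ (the Jacobian at the \emph{exact} path point); the lemma with inequality \eqref{eq:E-bound} is exactly the tool to pass between these. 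Concretely, $\|Y_i A\| = \|JF_{t_i}(x_i)^{-1}JF_{t_i}(x(t_i))\cdot JF_{t_i}(x(t_i))^{-1}A\| \leq \|JF_{t_i}(x_i)^{-1}JF_{t_i}(x(t_i))\|\cdot\|JF_{t_i}(x(t_i))^{-1}A\|$, so I need an \emph{upper} bound on $\|JF_{t_i}(x_i)^{-1}JF_{t_i}(x(t_i))\|$; since $x_i$ is in particular a $\tau$-approximate solution at time $t_i$ (it was refined with $\rho<\tau$, so $\|Id - JF_{t_i}(x_i)^{-1}JF_{t_i}(x(t_i))\| \le \rho < \tau$), this norm is at most $1+\tau$. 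Hence the numerator term $\|Y_i F^{(1)}(x_i)\|$ and the denominator term $r_i\|Y_i JF^{(1)}(z)\|$ are each controlled — modulo also replacing $x_i$ by $x(t_i)$ and $x_i + r_iB$ by $x(t_i)+r(t_i)B$ inside the arguments of $F^{(1)}$ and $JF^{(1)}$, which should be absorbed either by enlarging the sup region or by a Lipschitz-type estimate; I would sidestep this by arranging the comparison so that $r_i = r(t_i)$ identifies the refinement radius with the path radius, so the boxes coincide.

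Once I have $\text{(denominator of }dt_i) \leq (1+\tau)\bigl(\|JF_{t_i}(x(t_i))^{-1}F^{(1)}(x(t_i))\| + r(t_i)\sup_{z}\|JF_{t_i}(x(t_i))^{-1}JF^{(1)}(z)\|\bigr) = (1+\tau) r(t_i) M(t_i)$, I get
\[
dt_i \geq \frac{(\tau-\rho)r_i}{(1+\tau)r(t_i)M(t_i)} \geq \frac{(\tau-\rho)r_{\min}}{(1+\tau)\,r(t_i)\,M(t_i)}.
\]
Here the role of $\eta$ is to make this usable: the hypothesis \eqref{eq:eta} says the second (Jacobian) term in $r(t)M(t)$ is at most $\eta$ times the first (velocity) term, so $r(t)M(t) \le (1+\eta)\|JF_t(x(t))^{-1}F^{(1)}(x(t))\|$; but I actually want the reverse direction, bounding $M(t)$ from above is free, so the $(1+\eta)$ factor enters when I instead lower-bound $dt_i$ by something comparable to $1/\|JF_t(x(t))^{-1}F^{(1)}(x(t))\|$ or, more cleanly, when I want $L$ itself written purely in terms of the speed. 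I would keep $L$ as is and let $(1+\eta)$ appear from bounding the denominator of $dt_i$ by $(1+\tau)(1+\eta)\|\cdot\|$-type quantities consistent with \eqref{eq:complextiy_bound}; the exact placement of the two constants is a bookkeeping matter I would fix to match the stated bound.

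Finally, I sum: since the step times $t_0=0 < t_1 < \cdots < t_P = 1$ partition $[0,1]$ with $t_{i+1} - t_i = dt_i$, and since $M$ is (piecewise) the integrand of $L$, I have $P = \sum_{i=0}^{P-1} 1 \leq \sum_{i=0}^{P-1} \frac{(1+\tau)(1+\eta)}{(\tau-\rho)r_{\min}} r(t_i)M(t_i)\cdot\frac{dt_i}{r(t_i)}$ — more carefully, from $dt_i \geq \frac{(\tau-\rho)r_{\min}}{(1+\tau)(1+\eta)}\cdot\frac{1}{r(t_i)M(t_i)/r(t_i)\cdot r(t_i)}$ I rearrange to $1 \leq \frac{(1+\tau)(1+\eta)}{(\tau-\rho)r_{\min}} M(t_i)\,dt_i$, and summing gives $P \leq \frac{(1+\tau)(1+\eta)}{(\tau-\rho)r_{\min}}\sum_i M(t_i)\,dt_i$. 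The last sum is a left Riemann sum for $L = \int_0^1 M(t)\,dt$; to conclude $\sum_i M(t_i)\,dt_i \le L$ I need $M$ to be monotone-friendly or I need a monotonicity/refinement argument — this is the point I expect to be the main obstacle, since $M(t)$ need not be monotone in $t$. I would resolve it by instead bounding $M(t_i) \le M(t)$-type uniformly is false in general; the cleaner route is to observe that the step \emph{from} $t_i$ uses data at $t_i$, and over $[t_i, t_{i+1}]$ one can compare $M(t_i)$ with $\int_{t_i}^{t_{i+1}} M(t)\,dt$ only if $dt_i$ is small relative to the variation of $M$. A safer formulation, which I would adopt, is to define $L$ as a supremum over partitions or to carry the analysis with $M$ replaced by its running supremum; alternatively, and most likely what the paper intends, one shows $dt_i$ is small enough that $M(t) \ge \frac12 M(t_i)$ on $[t_i,t_{i+1}]$, costing only a harmless constant. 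I would present the summation step with whichever of these regularity hypotheses on $M$ is cleanest, flagging it explicitly, since everything else is the mechanical chain of inequalities above driven by \eqref{eq:E-bound} and \eqref{eq:eta}.
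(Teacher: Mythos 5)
You correctly flag the weak point: your summation produces a left Riemann sum $\sum_i M(t_i)\,dt_i$ and asks that it be $\le \int_0^1 M(t)\,dt = L$, which fails without a monotonicity or small-variation hypothesis on $M$. This is a genuine gap, and the paper's proof is organized precisely to avoid it. Instead of lower-bounding each $dt_i$ by $1/M(t_i)$ and summing, the paper writes $L \ge \int_0^1 N(t)\,dt$ (dropping the $1/r(t)$ factor via $r(t)\le 1$, ensured by \Cref{algo:meta_refine}), where $N(t)$ is the unweighted integrand, then uses the elementary estimate
\[
\int_{t_i}^{t_{i+1}} N(t)\,dt \;\ge\; \Delta t_i \min_{t\in[t_i,t_{i+1}]} N(t),
\]
which is always true and involves no Riemann-sum comparison. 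The burden then shifts to producing a $t$\emph{-uniform} lower bound on $N(t)$ over the whole step $[t_i,t_{i+1}]$, not just at the endpoint. That is exactly what the triangle-inequality estimate \eqref{eq:step-bound} and the lemma \eqref{eq:E-bound} provide: for every $t\in[t_i,t_{i+1}]$,
\[
N(t)\;\ge\;\|JF_t(x(t))^{-1}F^{(1)}(y_i)\|\;\ge\;\tfrac{1}{1+\tau}\,\|JF_{t_i}(y_i)^{-1}F^{(1)}(y_i)\|,
\]
a bound constant in $t$, so the minimum is controlled. Combined with $\Delta t_i\,\|JF_{t_i}(y_i)^{-1}F^{(1)}(y_i)\| \ge \frac{(\tau-\rho)r_i}{1+\eta}$ (which follows from the stepsize formula and \eqref{eq:eta}), each term in the sum is $\ge \frac{(\tau-\rho)r_i}{(1+\tau)(1+\eta)}\ge \frac{(\tau-\rho)r_{\min}}{(1+\tau)(1+\eta)}$, and the bound on $P$ drops out.

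A secondary point: your use of \eqref{eq:E-bound} is oriented differently from the paper's. You seek an upper bound on $\|JF_{t_i}(x_i)^{-1}JF_{t_i}(x(t_i))\|$ to pass from the algorithm's $Y_i$ to the exact-path Jacobian at the single time $t_i$. The paper instead applies the lemma as a lower bound $\|JF_t(x(t))^{-1}JF_{t_i}(y_i)w\|\ge\frac{1}{1+\tau}\|w\|$, valid for \emph{all} $t\in[t_i,t_{i+1}]$, inserting $JF_{t_i}(y_i)JF_{t_i}(y_i)^{-1}$ and taking $w=JF_{t_i}(y_i)^{-1}F^{(1)}(y_i)$. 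The $\tau$-approximation holding over the entire subinterval is what lets $\|E\|<\tau$ uniformly, and thus what makes the ``min over the subinterval'' step go through — this is the structural ingredient your plan is missing. Once you reorganize around ``lower bound $\min_{[t_i,t_{i+1}]} N$'' rather than ``lower bound $dt_i$ by $1/M(t_i)$,'' the rest of your chain of inequalities is essentially the paper's, and no extra regularity hypothesis on $M$ is needed.
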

\begin{proof}
Suppose that it took $P$ iterations, and so there is a sequence 
\[0=t_0<t_1<\cdots< t_{P-1}< t_P=1.\]   
Fix $i$ and let $\Delta t_i=t_i-t_{i-1}>0$. Let $y_i$ be the $\rho$-approximate solution to $F_{t_i}=0$ with corresponding radius $r$, which is also the $\tau$-approximate solution to $F_t=0$ for all $t\in [t_i,t_{i+1}]$. We lower bound the integral $L$ by replacing continuous path quantities with these discrete approximations obtained from the tracking algorithm. 
Then,
\begin{align*}
    L &= \int_0^1\frac{1}{r(t)}\left(\|JF_t(x(t))^{-1}F^{(1)}(x(t))\|+r(t)\sup\limits_{z\in x(t)+r(t)B}\|JF_t(x(t))^{-1}JF^{(1)}(z)\|\right)dt\\
    &=\sum_{i=0}^{P-1}\int_{t_i}^{t_{i+1}}\frac{1}{r(t)}\left(\|JF_t(x(t))^{-1}F^{(1)}(x(t))\|+r(t)\sup\limits_{z\in x(t)+r(t)B}\|JF_t(x(t))^{-1}JF^{(1)}(z)\|\right)dt\\
    &\geq \sum_{i=0}^{P-1}\Delta t_i\min\limits_{t_i\leq t\leq t_{i+1}}\frac{1}{r(t)}\left(\|JF_t(x(t))^{-1}F^{(1)}(x(t))\|+r(t)\sup\limits_{z\in x(t)+r(t)B}\|JF_t(x(t))^{-1}JF^{(1)}(z)\|\right)\\
    &\geq \sum_{i=0}^{P-1}\Delta t_i\min\limits_{t_i\leq t\leq t_{i+1}}\|JF_t(x(t))^{-1}F^{(1)}(x(t))\|+r(t)\sup\limits_{z\in x(t)+r(t)B}\|JF_t(x(t))^{-1}JF^{(1)}(z)\|.
\end{align*}
The last inequality uses $r(t) \leq 1$, which is ensured by \Cref{algo:meta_refine}.

First, note that 
\begin{align*}
    \|JF_t(x(t))^{-1}F^{(1)}(x(t))\|&=\|JF_t(x(t))^{-1}(F^{(1)}(y_i)-F^{(1)}(y_i)+F^{(1)}(x(t)))\|\\
    &\geq \|JF_t(x(t))^{-1}F^{(1)}(y_i)\|-\|JF_t(x(t))^{-1}(F^{(1)}(y_i)-F^{(1)}(x(t)))\|\\
    &\geq \|JF_t(x(t))^{-1}F^{(1)}(y_i)\|-r(t)\sup\limits_{z\in x(t)+r(t)B}\|JF_t(x(t))^{-1}JF^{(1)}(z)\|.
\end{align*}
Therefore, 
\begin{equation}\label{eq:step-bound}
    \|JF_t(x(t))^{-1}F^{(1)}(x(t))\|+r(t)\sup\limits_{z\in x(t)+r(t)B}\|JF_t(x(t))^{-1}JF^{(1)}(z)\|\geq \|JF_t(x(t))^{-1}F^{(1)}(y_i)\|.
\end{equation}

Combining inequalities (\ref{eq:E-bound}) and (\ref{eq:step-bound}) yields that
\begin{equation*}
    \|JF_t(x(t))^{-1}F^{(1)}(y_i)\|=\|(JF_t(x(t))^{-1}JF_{t_i}(y_i))JF_{t_i}(y_i)^{-1}F^{(1)}(y_i)\|\geq\frac{1}{1+\tau}\|JF_{t_i}(y_i)^{-1}F^{(1)}(y_i)\|.
\end{equation*}
From the assumption that 
\begin{equation*}
    r_i\sup_{z\in y_i+r_iB}\|JF_{t_i}(y_i)^{-1}JF_{t_i}(z)\|\leq \eta \|JF_{t_i}(y_i)^{-1}F^{(1)}(y_i)\|,
\end{equation*}
we have 
\begin{equation*}
    \Delta t_i \geq \frac{(\tau-\rho)r_i}{(1+\eta)\|JF_{t_i}(y_i)^{-1}F^{(1)}(y_i)\|}
\end{equation*}
which implies that
\begin{equation*}
    \Delta t_i\|JF_{t_i}(y_i)^{-1}F^{(1)}(y_i)\|\geq \frac{(\tau-\rho)r_i}{1+\eta}.
\end{equation*}
Therefore, 
\begin{equation*}
    L\geq \sum_{i=0}^{P-1}\Delta t_i\|JF_t(x(t))^{-1}F^{(1)}(y_i)\|\geq \sum_{i=0}^{P-1}\frac{\Delta t_i\|JF_{t_i}(y_i)^{-1}F^{(1)}(y_i)\|}{1+\tau}\geq \sum_{i=0}^{P-1}\frac{(\tau-\rho)r_i}{(1+\tau)(1+\eta)}
\end{equation*}
and the claimed result follows.
\end{proof}

\subsection{Certification radius bounds via Smale's alpha theory}

\Cref{thm:complexity} proposes a complexity bound in terms of the minimum certification radius $r_{\min}$. However, this approach is inherently conservative as it ignores 
the local behavior of the homotopy and treats all steps as if they achieve only the worst-case certification 
radius. Hence, it is natural to find the bound of each $r_i$ using computable quantities. We derive such bounds using Smale's alpha theory.

Let $F:\mathbb{C}^n\to\mathbb{C}^n$ be a system and $x\in\mathbb{C}^n$. 
Smale's alpha theory introduces two 
fundamental quantities:
\begin{equation*}
\beta(F,x) := \|JF(x)^{-1}F(x)\|, \quad
\gamma(F,x) := \sup_{k\geq 2}\|\frac{JF(x)^{-1}J^kF(x)}{k!}\|^{\frac{1}{k-1}}. 
\end{equation*}
The quantity $\beta(F,x)$ measures 
the size of the Newton step, while $\gamma(F,x)$ captures the local nonlinearity 
of $F$ near $x$ through its higher derivatives. Together, they determine 
the convergence behavior of Newton's method and the existence of nearby solutions. For more details, see \cite[Chapter 8]{blum2012complexity}. The following proposition provides a lower bound on 
certification radii using these quantities.

\begin{prop}\label{prop:radii_bound}
Let $F_t:\mathbb{C}^n\times[0,1]\to\mathbb{C}^n$ be a homotopy, 
and let $y_i$ be a $\rho$-approximate solution to $F_{t_i}=0$ 
with certification radius $r_i$. Assume that 
$u := \gamma(F_{t_i},y_i) r_i \leq 1-\frac{\sqrt{2}}{2}$. Then
\begin{equation}\label{eq:r_theory}
r_i \geq \frac{\beta(F_{t_i},y_i)}{\rho+1}.    
\end{equation}
\end{prop}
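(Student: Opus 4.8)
The plan is to show that under the hypothesis $u = \gamma(F_{t_i}, y_i) r_i \le 1 - \tfrac{\sqrt 2}{2}$, the point $y_i$ satisfies the Krawczyk condition $\|K(F_{t_i}, y_i, r_i)\| \le r_i \rho$ as soon as $r_i \ge \beta(F_{t_i}, y_i)/(\rho+1)$, which is exactly what it means for $r_i$ to be a valid certification radius; hence the refinement loop in \Cref{algo:meta_refine} cannot have stopped with a smaller radius, giving \eqref{eq:r_theory}. (This is the content of the commented-out lemma at the top of the excerpt, specialized to $F = F_{t_i}$, $x = y_i$.) First I would write $K(F_{t_i}, y_i, r_i) = -YF_{t_i}(y_i) + (Id - Y\,\square JF_{t_i}(y_i + r_i B))\, r_i B$ with $Y = JF_{t_i}(y_i)^{-1}$ and bound its norm by $\beta(F_{t_i}, y_i) + \|Id - Y\,\square JF_{t_i}(y_i + r_i B)\| \cdot r_i$.

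The main work is bounding $\|Id - Y\,\square JF_{t_i}(y_i + r_i B)\|$. For this I would use the Taylor expansion of the Jacobian around $y_i$: for any $z \in y_i + r_i B$,
\[
Id - Y\, JF_{t_i}(z) = -\sum_{k\ge 2} k\, \frac{JF_{t_i}(y_i)^{-1} J^k F_{t_i}(y_i)}{k!}(z - y_i)^{k-1},
\]
so that, using $\|z - y_i\| \le r_i$ and the definition of $\gamma$, one gets the termwise bound $\|Id - Y\, JF_{t_i}(z)\| \le \sum_{k\ge 2} k u^{k-1} = \frac{1}{(1-u)^2} - 1$. Since this bound is uniform over $z \in y_i + r_i B$, it also bounds the interval enclosure norm $\|Id - Y\,\square JF_{t_i}(y_i + r_i B)\|$. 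Combining, the Krawczyk norm is at most $\beta(F_{t_i}, y_i) + \bigl(\frac{1}{(1-u)^2} - 1\bigr) r_i$, so it suffices to have
\[
\beta(F_{t_i}, y_i) + \left(\frac{1}{(1-u)^2} - 1\right) r_i \le r_i \rho.
\]

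The final step is to see why the hypothesis $u \le 1 - \tfrac{\sqrt2}{2}$ together with $r_i \ge \beta(F_{t_i}, y_i)/(\rho+1)$ implies this inequality. When $u \le 1 - \tfrac{\sqrt2}{2}$ we have $(1-u)^2 \ge \tfrac12$, hence $\frac{1}{(1-u)^2} - 1 \le 1$, so the left side is at most $\beta(F_{t_i}, y_i) + r_i$; and $r_i \ge \beta(F_{t_i}, y_i)/(\rho+1)$ rearranges to $\beta(F_{t_i}, y_i) + r_i \le r_i(\rho+1) = r_i \rho + r_i$... I need the sharper form $\beta(F_{t_i},y_i) \le r_i\rho - (\frac{1}{(1-u)^2}-1)r_i$, i.e. $\beta(F_{t_i},y_i) + (\frac{1}{(1-u)^2}-1)r_i \le r_i\rho$; using $\frac{1}{(1-u)^2}-1 \le 1 \le \rho + 1 - \beta(F_{t_i},y_i)/r_i$... so what is really needed is that $r_i \ge \beta/(\rho+1)$ forces $\beta/r_i \le \rho+1$, and then $\beta + (\frac{1}{(1-u)^2}-1)r_i \le \beta + r_i$, which is $\le r_i\rho$ exactly when $\beta \le r_i(\rho-1)$ — that fails for $\rho<1$, so the argument must instead be run directly on the refinement loop's stopping criterion rather than on the raw inequality. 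The cleanest route, which I would take, is: \Cref{algo:meta_refine} halves $\tilde r$ only while $\|Y F(\tilde x)\| \le \tfrac18(1-\rho)\tau\tilde r$, and then expands by doubling; so the returned $r_i$ satisfies a lower bound forced by the beta-test threshold, and one converts $\|YF(\tilde x)\| = \beta$ into $r_i \gtrsim \beta$ up to the stated constant. The main obstacle is tracking the exact constant through the interplay of the shrink condition $\|YF(\tilde x)\| \le \tfrac18(1-\rho)\tau\tilde r$, the doubling in the expansion loop, and the bound $\frac{1}{(1-u)^2}-1 \le 1$; I expect the $\tfrac{1}{\rho+1}$ factor to emerge from requiring $\beta + r_i \le r_i(\rho+1)$ after absorbing the nonlinearity term, so verifying that the loop genuinely exits at a radius no smaller than $\beta/(\rho+1)$ under $u \le 1-\tfrac{\sqrt2}{2}$ is where the care is needed.
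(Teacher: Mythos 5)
Your proposal starts from the wrong direction and never recovers. You try to \emph{upper}-bound $\|K(F_{t_i},y_i,r_i)\|$ by $\beta + \bigl(\tfrac{1}{(1-u)^2}-1\bigr)r_i$ and then show this is at most $r_i\rho$ when $r_i \ge \beta/(\rho+1)$ — you correctly notice mid-proof that this arithmetic collapses (it would need $\beta \le r_i(\rho-1)$ with $\rho<1$). The upper-bound derivation you carry out is in fact essentially the commented-out lemma at the top of the paper's source, which gives a \emph{sufficient} condition for $x$ to be a $\rho$-approximate solution; that is the converse of what the proposition asserts, and the two do not combine the way you hope. The paper's argument runs in the opposite direction: since $y_i$ is \emph{given} to be a $\rho$-approximate solution with certification radius $r_i$, the Krawczyk condition $\|K(F_{t_i},y_i,r_i)\| \le r_i\rho$ already holds by hypothesis, so what is needed is a \emph{lower} bound on $\|K(F_{t_i},y_i,r_i)\|$. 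Applying the reverse triangle inequality and the same Taylor expansion you wrote down gives
\[
\|K(F_{t_i},y_i,r_i)\| \;\ge\; \beta(F_{t_i},y_i) - \left(\tfrac{1}{(1-u)^2}-1\right) r_i \;\ge\; \beta(F_{t_i},y_i) - r_i,
\]
where the last step uses $u \le 1-\tfrac{\sqrt2}{2}$ so that $\tfrac{1}{(1-u)^2}-1 \le 1$. Chaining $\beta - r_i \le \|K\| \le r_i\rho$ immediately yields $r_i \ge \beta/(\rho+1)$.

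Your fallback — reasoning about the shrink/expand loops in \Cref{algo:meta_refine} — is not only unfinished but unnecessary and somewhat misdirected: the proposition is a pointwise statement about any $\rho$-approximate solution with any certification radius, with no reference to how $r_i$ was produced, and moreover shrinking $r$ makes the Krawczyk test \emph{easier} (less interval overestimation), so the halving loop cannot by itself enforce a lower bound on the exit radius. The $\tfrac{1}{\rho+1}$ factor comes purely from combining the two one-sided bounds on $\|K\|$, not from the $\tfrac18(1-\rho)\tau$ threshold or the doubling loop.
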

\begin{proof}
Consider the Krawczyk operator $K(F_{t_i},y_i,r_i)$. Then,
\begin{align*}
\|K(F_{t_i},y_i,r_i)\|
&= \|-JF_{t_i}(y_i)^{-1}F_{t_i}(y_i)+(Id-JF_{t_i}(y_i)^{-1}JF_{t_i}(y_i+r_iB))r_iB\|\\
&= \|-JF_{t_i}(y_i)^{-1}F_{t_i}(y_i)+JF_{t_i}(y_i)^{-1}(JF_{t_i}(y_i)-JF_{t_i}(y_i+r_iB))r_iB\|\\
&\geq \|JF_{t_i}(y_i)^{-1}F_{t_i}(y_i)\| -\|JF_{t_i}(y_i)^{-1}\left(JF_{t_i}(y_i)-\sum_{k=1}^\infty\frac{J^kF_{t_i}(y_i)}{(k-1)!}(r_iB)^{k-1}\right)r_iB\|\\
&= \|JF_{t_i}(y_i)^{-1}F_{t_i}(y_i)\|-\|JF_{t_i}(y_i)^{-1}\sum_{k=2}^\infty\frac{J^kF_{t_i}(y_i)}{(k-1)!}r_i^{k-1}\|r_i\\
&= \|JF_{t_i}(y_i)^{-1}F_{t_i}(y_i)\|-\|JF_{t_i}(y_i)^{-1}\sum_{k=2}^\infty k\frac{J^kF_{t_i}(y_i)}{k!}r_i^{k-1}\|r_i\\
&= \beta(F_{t_i},y_i)-\sum_{k=2}^\infty k\gamma(F_{t_i},y_i)^{k-1}r_i^k\\
&= \beta(F_{t_i},y_i)-\left(\frac{1}{(1-u)^2}-1\right)r_i\\
&\geq \beta(F_{t_i},y_i)-r_i,
\end{align*}
the last inequality uses 
the assumption $u \leq 1-\frac{\sqrt{2}}{2}$, which implies 
$\frac{1}{(1-u)^2}-1 \leq 1$.

Since $y_i$ is a $\rho$-approximate solution to $F_{t_i}=0$, 
we have $\|K(F_{t_i},y_i,r_i)\| \leq r_i\rho$. Therefore, $\beta(F_{t_i},y_i) - r_i \leq r_i\rho$,
which yields $r_i \geq \frac{\beta(F_{t_i},y_i)}{\rho+1}$.
\end{proof}

The bound in Proposition \ref{prop:radii_bound} is typically conservative since the triangular inequality in the proof ignores cancellation in the Krawczyk operator. The resulting bound is proportional to $\beta(F_{t_i}, y_i)$, yet an assumption that $y_i$ is $\rho$-approximate implies $\beta$ is small. As demonstrated in Section~\ref{sec:empirical_analysis}, actual certification radii greatly exceed this bound. This highlights the need for tighter analysis of Krawczyk-based certification.

\section{Experiments}\label{sec:experiments}

We implemented \Cref{algo:a-priori_track} in \texttt{Julia}, utilizing the \texttt{Nemo.jl} library \cite{fieker2017nemo} for rigorous complex interval arithmetic. Throughout this section, we use $\rho=\frac{1}{8}$ and $\tau=\frac{7}{8}$.

\subsection{Benchmark examples}\label{sec:empirical_analysis}

The primary goal of this experiment is to investigate the practical behavior of the quantities introduced in our main results. While the complexity bounds in \Cref{sec:a_priori_predictor,sec:complexity_analysis} provide worst-case guarantees, we aim to quantify their magnitude and distribution in actual tracking scenarios. Additionally, we report auxiliary quantities obtained from the implementation to provide a complete picture of the algorithm's performance.

We consider two families of benchmark systems: Katsura systems and dense random systems of degree $2$ with varying the number of variables. For each system, we construct a linear homotopy starting from a B\'ezout start system and track all solution paths. We report the average of the following quantities: \begin{itemize} 
\item the minimum stepsize;
\item the median stepsize for each solution path, which shows the typical behavior of the stepsize;
\item the minimum certification radius $r_{\min}$ for each solution path, which is the critical factor in the complexity bound \eqref{eq:complextiy_bound}; \item The gap between the theory and practice, measured by the average ratio between the certification radius $r_i$ and its theoretical lower bound \eqref{eq:r_theory} at each iteration; and
\item The maximum value of $\eta$ defined in \eqref{eq:eta}, to verify the regularity assumption of the solution paths. 
\end{itemize} \Cref{tab:data_stat} summarizes the results.

\begin{table}[ht]
\centering
\renewcommand{\arraystretch}{1.20}
\begin{tabular}{c||c|c|c|c|c|c|c}  
system & paths & iters &  $dt_{\min}$ & median $dt$ & $r_{\min}$ & avg. ($\frac{r_i}{r_{\text{theory}}}$) & $\eta_{\max}$\\
\hline
katsura3 & $4$ & $1101.25$ & 
$4.2\cdot10^{-4}$ & 
$9.6\cdot 10^{-4}$ &
$1.6\cdot 10^{-3}$ & 
$1.9\cdot10^{5}$ &
$2.1\cdot10^{-2}$ \\
katsura4 & $8$ & $1942.25$ & 
$1.5\cdot10^{-4}$ & 
$4.9\cdot 10^{-4}$ &
$9.8\cdot 10^{-4}$ &
$1.5\cdot 10^{5}$ & 
$1.9\cdot10^{-2}$\\
katsura5 & $16$ & $6892.81$ & 
$2.9\cdot10^{-5}$ & 
$1.4\cdot 10^{-4}$ &
$4.4\cdot 10^{-4}$ & 
$6.4\cdot10^{4}$ & 
$1.3\cdot10^{-2}$\\
katsura6 & $32$ & $12777.9$ & 
$2.8\cdot10^{-5}$ & 
$1.3\cdot 10^{-4}$ &
$2.7\cdot 10^{-4}$ & 
$1.4\cdot10^{5}$ & 
$1.3\cdot10^{-2}$\\
random$_{(2^3)}$ & $8$ & $2498.5$ & 
$7.4\cdot10^{-4}$ & 
$1.2\cdot 10^{-3}$ &
$1.7\cdot 10^{-3    }$ & 
$3.0\cdot10^{5}$ & 
$3.0\cdot10^{-2}$\\
random$_{(2^4)}$ & $16$ & $8275.44$ & 
$2.4\cdot10^{-4}$ & 
$5.1\cdot 10^{-4}$ &
$9.7\cdot 10^{-4}$ & 
$1.0\cdot10^{5}$ & 
$2.7\cdot10^{-2}$\\
random$_{(2^5)}$ & $32$ & $36323.3$ & 
$3.9\cdot10^{-5}$ & 
$8.6\cdot 10^{-5}$ &
$2.7\cdot 10^{-4}$ & 
$1.0\cdot10^{5}$ & 
$5.6\cdot10^{-2}$\\
random$_{(2^6)}$ & $64$ & $105884.52$ & 
$2.6\cdot10^{-5}$ & 
$6.3\cdot 10^{-5}$ &
$2.6\cdot 10^{-4}$ & 
$6.5\cdot10^{4}$ & 
$2.4\cdot10^{-2}$

\end{tabular}
\smallskip
\caption{Average iterations per path, stepsize 
statistics, minimum certification radius, theory-practice gap 
(Proposition \ref{prop:radii_bound}), and the maximum $\eta$ parameter 
(Proposition \ref{eq:eta}).}\label{tab:data_stat}

\end{table}

Relying only on averages across all solution paths can obscure the actual tracking behaviors, as it hides the impact of outliers and difficult paths. To address this, \Cref{fig:boxplot} presents box plots of the stepsize distribution of a single solution path from Katsura 4, 5, 6 and random systems with $4,5,6$ equations.

We observe a significant gap between the theoretical certification radius and the actual radius used in practice. The average ratio $\frac{r_i}{r_{\text{theory}}}$ consistently lies in the range of $10^4$ to $10^5$. This indicates that the theoretical bound derived in Proposition \ref{prop:radii_bound} is rigorous but conservative compared to the computed Krawczyk regions.
Also, the maximum values of $\eta$ remain small, approximately $10^{-2}$ across all benchmarks. It validates the regularity assumption \eqref{eq:eta} and shows that the total iteration count $P$ is significantly governed by $L$ and $r_{\min}$. Lastly, as the dimension of the system increases (e.g., from Katsura 4 to 6), both $dt_{\min}$ and the median stepsize decrease, while the spread of the distribution widens. As shown in \Cref{fig:boxplot} (plotted on a $\log_{10}$ scale), Katsura 6 exhibits a much larger variance in stepsize compared to smaller systems. This suggests that higher-dimensional paths may encounter more complex local geometries, and they necessitate reductions in stepsize to maintain certification, which impacts the total number of iterations.

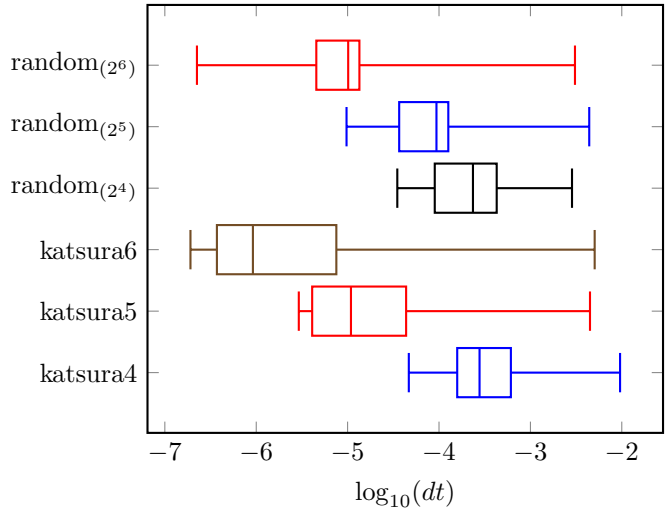
\begin{figure}[H]
    \centering
    \begin{tikzpicture}
  \begin{axis}
    [
    thick=.5,
    xlabel={$\log_{10}(dt)$},
    ytick={1,2,3,4,5,6},
    yticklabels={katsura4, katsura5, katsura6,random$_{(2^4)}$,random$_{(2^5)}$,random$_{(2^6)}$}
    ]
    \addplot+[
    boxplot prepared={
      median= -3.5564767719499866,
      upper quartile= -3.213556691871661,
      lower quartile= -3.800882868023702,
      upper whisker=-2.0189159613361665,
      lower whisker= -4.330409895965605
    },
    ] coordinates {};
    \addplot+[
    boxplot prepared={
      median=-4.963473447904334,
      upper quartile= -4.359926301025817,
      lower quartile=-5.387952152232032,
      upper whisker= -2.3479329351219187,
      lower whisker=-5.534190753729732
    },
    ] coordinates {};
    \addplot+[
    boxplot prepared={
      median= -6.0363538746381495,
      upper quartile= -5.123403505053829,
      lower quartile=-6.430240542264745,
      upper whisker=  -2.296733182773052,
      lower whisker= -6.719686311999436
    },
    ] coordinates {}; 
        \addplot+[
    boxplot prepared={
      median= -3.6285755728822955,
      upper quartile= -3.3687325465765716,
      lower quartile= -4.046587324373451,
      upper whisker=-2.5447082182938323,
      lower whisker= -4.456410032417957
    },
    ] coordinates {};
            \addplot+[
    boxplot prepared={
      median= -4.0279136578740244,
      upper quartile= -3.8983309833957964,
      lower quartile= -4.43584982704036,
      upper whisker=-2.3562627472269573,
      lower whisker= -5.011864244006888
    },
    ] coordinates {};
            \addplot+[
    solid,boxplot prepared={
      median= -4.99384154564455,
      upper quartile= -4.871030320268869,
      lower quartile= -5.342599527778019,
      upper whisker= -2.5122964897747115,
      lower whisker= -6.648709940109651
    },
    ] coordinates {};
    \end{axis}
\end{tikzpicture}

    \caption{Stepsize distributions (log scale) for a single solution path from each benchmark system. The boxes show interquartile ranges with the median marked by the vertical lines. Whiskers extend to the minimum and maximum values.}
    \label{fig:boxplot}
\end{figure}

\subsection{Univariate examples: theory validation via exact computation}

In \Cref{sec:complexity_analysis}, the number of iterations \eqref{eq:complextiy_bound} is proportional to the weight path length $L$. To validate this directly, we construct a simple univariate system where both quantities can be computed exactly.

Consider the target system $f(x) = x^2 - m = 0$ for a constant $m > 1$, with the B\'ezout start system $g(x) = x^2 - 1 = 0$. The 
linear homotopy is $F_t = (1-t)g + tf = 0$, which simplifies to 
$F_t(x) = x^2 - mt + t - 1 = 0$. The positive solution path is 
$x(t) = \sqrt{mt - t + 1}$.

We compute the weighted path length
\[
L := \int_0^1 \frac{1}{r(t)} \left( \|JF_t(x(t))^{-1}F^{(1)}(x(t))\| 
+ r(t) \sup_{z \in x(t)+r(t)B} \|JF_t(x(t))^{-1}JF^{(1)}(z)\| \right) dt.
\]
By \Cref{eq:velocity}, the first term equals the path velocity
\begin{equation*}
\|JF_t(x(t))^{-1}F^{(1)}(x(t))\| = \|\dot{x}(t)\| 
= \left|\frac{m-1}{2\sqrt{mt-t+1}}\right|.
\end{equation*}
Note that $JF^{(1)} = 0$ since $F^{(1)}(x) = -(m-1)$ is constant, and therefore the second term vanishes.
Moreover, the certification radius remains constant. Tracking $F_t=0$ 
confirms $r(t) = 0.05$ for all $t \in [0,1]$, as the Jacobian 
$JF_t(x) = 2x$ varies smoothly along the path. Combining these, 
we obtain
\begin{equation*}
L = \frac{1}{0.05} \int_0^1 \frac{m-1}{2\sqrt{mt-t+1}} dt 
  = 20 \int_0^1 \frac{m-1}{2\sqrt{1+t(m-1)}} dt 
  = 20(\sqrt{m} - 1).
\end{equation*}
\Cref{tab:iter_per_length} reports the ratio of actual 
iteration count to computed path length for various values of $m$.

\begin{table}[ht]
\centering
\renewcommand{\arraystretch}{1.20}
\begin{tabular}{c||c}  
$m$ & iters$/L$ \\
\hline
$10$ & $1.3643$ \\
$100$ & $1.3444$ \\
$1000$ & $1.3372$ \\
$10000$ & $1.3348$ \\
$20000$ & $1.3346$ \\
$30000$ & $1.3342$

\end{tabular}
\smallskip
\caption{Ratio of iteration count to weighted path length for univariate quadratic systems.}\label{tab:iter_per_length}

\end{table}
With uniform certification radius $r(t) = r_{\min} = 0.05$ throughout the path, the example confirms that the ratio between the number of iterations $P$ and the length of the path $L$ are approximately $1.33$ across all test cases. It demonstrates the linear relationship between $P$ and $L$.

\subsection{Computational efficiency}\label{sec:efficiency}

Another takeaway is that reducing 
interval arithmetic improves tracking efficiency. 
Algorithm~\ref{algo:a-priori_track} demonstrates this  
for constant predictors by eliminating adaptive stepsize selection, 
which requires repeated Krawczyk operator evaluation with interval 
arithmetic. We investigate the practical impact of interval arithmetic reduction 
on tracking efficiency across constant and higher-order predictors.

We compare the following algorithms on benchmark systems with B\'ezout start systems by tracking all solution paths and reporting 
average iteration counts:

\begin{itemize}
\item \textbf{Adaptive} (\Cref{algo:track}): 
      Constant predictor with adaptive stepsize selection via 
      repeated interval arithmetic evaluation.

\item \textbf{\textit{A priori}} (\Cref{algo:a-priori_track}): 
      Constant predictor using \Cref{thm:stepsize_bound} to 
      avoid adaptive selection.

\item \textbf{Higher-order} \cite[Algorithm 4]{guillemot2024validated}: 
      Hermite predictor with adaptive stepsize. Uses Taylor models \cite{neumaier2003taylor} to reduce overestimation 
      in the predictor step.

\item \textbf{Higher-order without truncation}: 
      Hermite predictor without Taylor models. It demonstrates 
      interval arithmetic overhead when Taylor models are not used.

\item \textbf{Mitigated higher-order}: Higher-order without truncation, but with
the inequality \eqref{eq:krawczyk_condition} replaced by \begin{equation*}
      \|-YF_{t_0}(x)\| + r\|(Id-Y\square JF_{t_0}(x+rB))\| \leq r\rho
      \end{equation*}
      to reduce interval arithmetic at the 
      certification step.

\item \textbf{Alpha theory tracking} \cite{beltran2011continuation}: 
      For comparison with known certified tracking methods.
\end{itemize}

\begin{table}[ht]
\centering
\renewcommand{\arraystretch}{1.20}
\begin{tabular}{c||c|c|c|c|c|c|c}  
System & Paths & Algo. \ref{algo:track} & Algo. \ref{algo:a-priori_track} & \begin{tabular}{@{}c@{}} Truncated \\ Hermite\end{tabular} & \begin{tabular}{@{}c@{}} Non-truncated\\Hermite \end{tabular}   & \begin{tabular}{@{}c@{}} Mitigated\\Hermite\end{tabular}& BL \cite{beltran2012certified}\\
\hline
katsura3 & $4$ & $2728.75$& 
$1978.75$  & 
$51.5$ &
$60.0$ & 
$44.5$ &
$569.5$ \\
katsura4 & $8$ & $4685.63$ & 
$3515.38$ & 
$58.5$ &
$70.25$ &
$55.0$ & 
$1149.88$\\
katsura5 & $16$ & $13148.0$ & 
$9774.94$ & 
$92.57$ &
$107.75$ & 
$78.25$ & 
$1498.38$\\
katsura6 & $32$ & $13229.5$ & 
$9949.09$ & 
$115.25$ &
$138.5$ & 
$108.91$ & 
$2361.81$\\
random$_{(2^3)}$ & $8$ & $3457.5$ & 
$2590.63$ & 
$32.25$ &
$38.63$ & 
$31.5$ & 
$370.13$\\
random$_{(2^4)}$ & $16$ & $54288.19$ & 
$40098.81$ & 
$80.44$ &
$102.25$ & 
$84.5$ & 
$813.81$\\
random$_{(2^5)}$ & $32$ & $44049.28$ & 
$33112.31$ & 
$56.78$ &
$72.69$ & 
$59.47$ & 
$1542.5$\\
random$_{(2^6)}$ & $64$ & $58106.27$ & 
$42604.41$ & 
$92.21875$ &
$115.66$ & 
$93.03$ & 
$2361.81$

\end{tabular}
\smallskip
\caption{Average iteration counts per solution path comparing constant predictor methods (\Cref{algo:track,algo:a-priori_track}), 
higher-order Krawczyk predictors, 
and alpha theory tracking \cite{beltran2012certified}.}\label{tab:ia_reduction}

\end{table}

Comparing \Cref{algo:track} with \Cref{algo:a-priori_track}, we observe improvement from all benchmarks. Improvements from reducing interval arithmetic were even observed for higher-order predictors in several test cases. While we analyze only constant predictors in \Cref{sec:a_priori_predictor,sec:complexity_analysis}, the core insight (minimizing interval arithmetic overhead) applies and yields practical benefits from different predictor types.

Extending \textit{a priori} stepsize to higher-order predictors remains a future direction. The predictor step involves higher derivatives whose interval enclosures are more complex, making stepsize bound derivation nontrivial. However, the mitigated Hermite results suggest such an extension could result in reduced overhead and advance the efficiency of certified tracking.

\section*{Acknowledgements}

We thank Michael Burr, Alexandre Guillemot, and Jonathan Hauenstein for helpful discussions.

\bibliographystyle{abbrv}
\bibliography{ref.bib}
\end{document}